\numberwithin{equation}{section}
\newtheorem{theorem}{Theorem}[section]
\newtheorem{definition}[theorem]{Definition}
\newtheorem{lemma}[theorem]{Lemma}
\newtheorem{example}[theorem]{Example}
\newtheorem{corollary}[theorem]{Corollary}
\newtheorem{theorem*}{Theorem}
\newcommand{\TTF}{\operatorname{TTF}}
\newcommand{\modu}{\operatorname{mod}}
\newcommand{\Hom}{\operatorname{Hom}}
\newcommand{\add}{\operatorname{add}}
\newcommand{\Ker}{\operatorname{Ker}}
\newcommand{\Coker}{\operatorname{Coker}}
\newcommand{\Ima}{\operatorname{Im}}
\title{ \bf Torsion Pairs in Recollements of Abelian Categories\thanks{2010 Mathematics Subject Classification: 18E40, 18G99.}
\thanks{Keywords: Torsion pairs, Recollements, Abelian categories.
 }}
\author{Xin Ma and Zhaoyong Huang\thanks{E-mail address:  maxin@smail.nju.edu.cn, huangzy@nju.edu.cn
}  \\
{\it \footnotesize  Department of Mathematics, Nanjing University, Nanjing 210093, Jiangsu Province, P. R. China}}
\date{ }
\begin{document}

\baselineskip=16pt
\maketitle

\begin{abstract}
For a recollement  $(\mathcal{A},\mathcal{B},\mathcal{C})$ of abelian categories, we show that
torsion pairs in $\mathcal{A}$ and $\mathcal{C}$ can induce torsion pairs in $\mathcal{B}$;
and the converse holds true under certain conditions.
\end{abstract}

\pagestyle{myheadings}
\markboth{\rightline {\scriptsize   Xin Ma and Zhaoyong Huang}}
         {\leftline{\scriptsize Torsion Pairs in Recollements of Abelian Categories}}


\section{Introduction} 

Recollements of abelian categories and triangulated categories play an important role in geometry of singular spaces,
representation theory, polynomial functors theory and ring theory \cite{BBD81F,BARI07H,CEPBSL86D,CEPBSL88F,JJP65S,KNJ94G,KNJ02A,PTL88P},
where recollements are known as torsion torsion-free theories. They first appeared in the construction of the category
of perverse sheaves on a singular space \cite{BBD81F}. Recollements of abelian categories and recollements of triangulated categories
are closely related; for instance, Chen \cite{CJM13C} constructed a recollement of abelian categories from a recollement
of triangulated categories, generalizing a result of Lin and Wang \cite{LYNWMX10F}. In addition, the properties of
torsion pairs and recollements of abelian categories have been studied by Psaroudakis and Vit\'{o}ria \cite{PCVJ14R},
establishing a correspondence between recollements of abelian categories up to equivalence and certain $\TTF$-triples.

Let $(\mathcal{A},\mathcal{B},\mathcal{C})$ be a recollement of triangulated categories.
Chen \cite{CJM13C} described how to glue together cotorsion pairs (which are essentially equal to torsion pairs \cite{IOYY08M}) in $\mathcal{A}$ and $\mathcal{C}$
to obtain a cotorsion pair in $\mathcal{B}$, which is a natural generalization of a similar
result in \cite{BBD81F} on gluing together $t$-structures of $\mathcal{A}$ and $\mathcal{C}$
to obtain a $t$-structure in $\mathcal{B}$. After taking the hearts $\mathcal{A}',\mathcal{B}',\mathcal{C}'$ of the glued $t$-structures,
then $(\mathcal{A}',\mathcal{B}',\mathcal{C}')$ is a recollement of abelian categories and a construction of gluing of torsion pairs in
this recollement was given by Liu, Vit\'oria and Yang in \cite{LQHVJYD14G} (also see \cite{JD09D}).
Note that the results of Liu, Vit\'oria and Yang (see \cite[Proposition 6.5 and Lemma 6.2]{LQHVJYD14G}) depend on
the recollements of triangulated categories and the proofs there do not work in the general case.
Our aim is to realize these in a recollement of general abelian categories.

This paper is organized as follows. In Section \ref{pre}, we give some terminologies and some preliminary results.
In Section \ref{tor}, we study torsion pairs in a recollement of abelian categories. Let $(\mathcal{A},\mathcal{B},\mathcal{C})$
be a recollement of abelian categories, we obtain a torsion pair in $\mathcal{B}$ from torsion pairs in $\mathcal{A}$ and $\mathcal{C}$.
Conversely, we show that, under certain conditions, a torsion pair in $\mathcal{B}$ can induce torsion pairs in $\mathcal{A}$ and $\mathcal{C}$.

\section{Preliminaries}\label{pre}
Throughout this paper, all subcategories are full, additive and closed under isomorphisms.
\begin{definition}\label{def-2.1}
{\rm (\cite{VFTP04C}) A recollement, denoted by ($\mathcal{A},\mathcal{B},\mathcal{C}$), of abelian categories is a diagram
$$\xymatrix{\mathcal{A}\ar[rr]!R|{i_{*}}&&\ar@<-2ex>[ll]!R|{i^{*}}\ar@<2ex>[ll]!R|{i^{!}}\mathcal{B}
\ar[rr]!L|{j^{*}}&&\ar@<-2ex>[ll]!L|{j_{!}}\ar@<2ex>[ll]!L|{j_{*}}\mathcal{C}}$$
of abelian categories and additive functors such that
\begin{enumerate}
\item[(1)] ($i^{*},i_{*}$), ($i_{*},i^{!}$), ($j_{!},j^{*}$) and ($j^{*},j_{*}$) are adjoint pairs.
\item[(2)] $i_{*}$, $j_{!}$ and $j_{*}$ are fully faithful.
\item[(3)] $\Ima i_{*}=\Ker j^{*}$.
\end{enumerate}}
\end{definition}

See \cite{VFTP04C,LZQWMX11K,PC14H} for examples of recollements of abelian categories.
We list some properties of recollements (see \cite{VFTP04C,PC14H,PCSO14G,PCVJ14R}), which will be used in the sequel.

\begin{lemma}\label{lem-2.2}
Let ($\mathcal{A},\mathcal{B},\mathcal{C}$) be a recollement of abelian categories as in Definition \ref{def-2.1}.
Then we have
\begin{enumerate}
\item[(1)] $i^{*}j_{!}=0=i^{!}j_{*}$.
\item[(2)] The functors $i_{*}$ and $j^{*}$ are exact, $i^{*}$ and $j_{!}$ are right exact, and $i^{!}$ and $j_{*}$ are left exact.
\item[(3)] All the natural transformations $\xymatrix@C=15pt{i^{*}i_{*}\ar[r]&1_{\mathcal{A}},}$
$\xymatrix@C=15pt{1_{\mathcal{A}}\ar[r]&i^{!}i_{*},}$
$\xymatrix@C=15pt{1_{\mathcal{C}}\ar[r]&j^{*}j_{!}}$
and $\xymatrix@C=15pt{j^{*}j_{*}\ar[r]&1_{\mathcal{C}}}$ are natural isomorphisms.
\item[(4)] For any $B\in \mathcal{B}$, there exist exact sequences
$$\xymatrix@C=15pt{0\ar[r]&i_{*}(A)\ar[r]&j_{!}j^{*}(B)\ar[r]^-{\epsilon_{B}}&B\ar[r]&i_{*}i^{*}(B)\ar[r]&0,}$$
$$\xymatrix@C=15pt{0\ar[r]&i_{*}i^{!}(B)\ar[r]&B\ar[r]^-{\eta_{B}}&j_{*}j^{*}(B)\ar[r]&i_{*}(A')\ar[r]&0}$$
in $\mathcal{B}$ with $A,A'\in \mathcal{A}$.
\item[(5)] There exists an exact sequence of natural transformations
$$\xymatrix@C=15pt{0\ar[r]&i_{*}i^{!}j_{!}\ar[r]&j_{!}\ar[r]&
j_{*}\ar[r]&i_{*}i^{*}j_{*}\ar[r]&0.}$$
\item[(6)] If $i^{*}$ is exact, then $i^{!}j_{!}=0$; and if $i^{!}$ is exact, then $i^{*}j_{*}=0$.
\end{enumerate}
\end{lemma}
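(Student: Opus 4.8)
The plan is to derive all six assertions from the four adjunctions, the full faithfulness of $i_{*},j_{!},j_{*}$, and the defining equality $\Ima i_{*}=\Ker j^{*}$, reserving the one real computation for part (4). For part (2) I would use that a left adjoint is right exact and a right adjoint is left exact: so $i^{*},j_{!}$ are right exact and $i^{!},j_{*}$ are left exact, while $i_{*}$ (right adjoint of $i^{*}$, left adjoint of $i^{!}$) and $j^{*}$ (right adjoint of $j_{!}$, left adjoint of $j_{*}$) are two-sided adjoints, hence exact. Part (3) is the standard criterion that for an adjunction the unit is invertible iff the left adjoint is fully faithful and the counit is invertible iff the right adjoint is fully faithful; applied to the four pairs, the full faithfulness of $i_{*},j_{!},j_{*}$ gives exactly the four stated isomorphisms. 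For part (1) I would compute, for $A\in\mathcal{A}$ and $C\in\mathcal{C}$, $\Hom_{\mathcal{A}}(i^{*}j_{!}C,A)\cong\Hom_{\mathcal{B}}(j_{!}C,i_{*}A)\cong\Hom_{\mathcal{C}}(C,j^{*}i_{*}A)=0$, because $j^{*}i_{*}=0$ by $\Ima i_{*}=\Ker j^{*}$; the Yoneda lemma then forces $i^{*}j_{!}=0$, and the symmetric chain of adjunctions gives $i^{!}j_{*}=0$.

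The heart of the lemma is part (4); I would build the first sequence and obtain the second dually. From the counit $\epsilon_{B}\colon j_{!}j^{*}(B)\to B$ of $(j_{!},j^{*})$ and the unit $u_{B}\colon B\to i_{*}i^{*}(B)$ of $(i^{*},i_{*})$, the four-term complex is necessarily $0\to\Ker\epsilon_{B}\to j_{!}j^{*}(B)\xrightarrow{\epsilon_{B}}B\xrightarrow{u_{B}}\Coker\epsilon_{B}\to 0$, so the task is to identify the two outer terms. For the kernel, the triangle identity and $1_{\mathcal{C}}\cong j^{*}j_{!}$ show $j^{*}(\epsilon_{B})$ is an isomorphism, hence $j^{*}(\Ker\epsilon_{B})=0$ and $\Ker\epsilon_{B}\in\Ker j^{*}=\Ima i_{*}$, i.e.\ $\Ker\epsilon_{B}\cong i_{*}(A)$. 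For the cokernel, I first note $u_{B}\circ\epsilon_{B}=0$, since $\Hom_{\mathcal{B}}(j_{!}j^{*}B,i_{*}i^{*}B)\cong\Hom_{\mathcal{A}}(i^{*}j_{!}j^{*}B,i^{*}B)=0$ by part (1); thus $u_{B}$ induces $\bar{u}\colon\Coker\epsilon_{B}\to i_{*}i^{*}(B)$. Applying the right exact $i^{*}$, which preserves cokernels, and using that $i^{*}(\epsilon_{B})$ has zero source $i^{*}j_{!}j^{*}(B)=0$, I get $i^{*}(\Coker\epsilon_{B})\cong i^{*}(B)\cong i^{*}i_{*}i^{*}(B)$ with $i^{*}(\bar{u})$ an isomorphism; since both $\Coker\epsilon_{B}$ and $i_{*}i^{*}(B)$ lie in $\Ima i_{*}$, on which $i^{*}$ reflects isomorphisms (being quasi-inverse to $i_{*}$ by part (3)), $\bar{u}$ is itself an isomorphism. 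The second sequence follows dually, using the unit $B\to j_{*}j^{*}(B)$, the counit $i_{*}i^{!}(B)\to B$, the left exactness of $i^{!}$ (which preserves kernels), and $i^{!}j_{*}=0$.

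For part (5) I would manufacture the canonical transformation $\theta\colon j_{!}\to j_{*}$ as the image of $\id$ under $\Hom_{\mathcal{B}}(j_{!}C,j_{*}C)\cong\Hom_{\mathcal{C}}(C,j^{*}j_{*}C)\cong\Hom_{\mathcal{C}}(C,C)$, using $j^{*}j_{*}\cong 1$; the sequence $0\to\Ker\theta\to j_{!}\xrightarrow{\theta}j_{*}\to\Coker\theta\to 0$ is then automatically exact. As $j^{*}(\theta)$ is an isomorphism, both ends lie in $\Ima i_{*}$; applying the left exact $i^{!}$ to $0\to\Ker\theta\to j_{!}\to j_{*}$ with $i^{!}j_{*}=0$ gives $i^{!}(\Ker\theta)\cong i^{!}j_{!}$, hence $\Ker\theta\cong i_{*}i^{!}j_{!}$, and applying the right exact $i^{*}$ to $j_{!}\to j_{*}\to\Coker\theta\to 0$ with $i^{*}j_{!}=0$ gives $\Coker\theta\cong i_{*}i^{*}j_{*}$. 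Part (6) then falls out by splitting this sequence: applying the now exact $i^{*}$ to $0\to i_{*}i^{!}j_{!}\to j_{!}\to\Ima\theta\to 0$, together with $i^{*}i_{*}\cong 1$ and $i^{*}j_{!}=0$, collapses it to $i^{!}j_{!}=0$, and dually applying the now exact $i^{!}$ to $0\to\Ima\theta\to j_{*}\to i_{*}i^{*}j_{*}\to 0$ with $i^{!}i_{*}\cong 1$ and $i^{!}j_{*}=0$ yields $i^{*}j_{*}=0$. I expect the only genuine obstacle to be the cokernel step in part (4): showing that the comparison map $\bar{u}$ is an isomorphism and not merely an epimorphism, which is where the interplay of the two exactness directions with $\Ima i_{*}=\Ker j^{*}$ is really used; everything else is formal manipulation of adjunctions.
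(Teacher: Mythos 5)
Your proof is correct. Note that the paper itself does not prove Lemma \ref{lem-2.2} at all: it is presented as a list of known properties of recollements with citations to \cite{VFTP04C,PC14H,PCSO14G,PCVJ14R}, so there is no in-paper argument to compare against. Your self-contained derivation is the standard one and all the steps check out: (2) and (3) are the general adjunction facts you quote; (1) follows by the Yoneda argument from $j^{*}i_{*}=0$; and in (4), the two genuinely substantive points --- that $u_{B}\circ\epsilon_{B}=0$ (via $\Hom_{\mathcal{B}}(j_{!}j^{*}B,i_{*}i^{*}B)\cong\Hom_{\mathcal{A}}(i^{*}j_{!}j^{*}B,i^{*}B)=0$) and that the induced map $\bar{u}\colon\Coker\epsilon_{B}\to i_{*}i^{*}(B)$ is an isomorphism because it is a map between objects of $\Ima i_{*}$ on which $i^{*}$ reflects isomorphisms --- are both handled correctly. (You should state explicitly that $\Coker\epsilon_{B}\in\Ker j^{*}=\Ima i_{*}$ because $j^{*}(\epsilon_{B})$ is epic; you only make the analogous remark in part (5), but the justification is the same one you already gave for the kernel.) Your treatment of (5) via the canonical transformation $j_{!}\to j_{*}$ and of (6) by splitting that four-term sequence and applying the now-exact $i^{*}$ or $i^{!}$ is exactly how these facts are established in the cited sources.
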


\begin{definition}
{\rm (\cite{DSE66A}) A pair of subcategories $(\mathcal{X},\mathcal{Y})$ of an abelian category $\mathcal{A}$ is called a \emph{torsion  pair}
if the following conditions are satisfied.
\begin{itemize}
\item[(1)] $\Hom_{\mathcal{A}}(\mathcal{X},\mathcal{Y})=0$; that is, $\Hom_{\mathcal{A}}(X,Y)=0$
for any $X\in\mathcal{X}$ and $Y\in\mathcal{Y}$.
\item[(2)] For any object $M\in \mathcal{A}$, there exists an exact sequence
\begin{align*}
\xymatrix@C=15pt{0\ar[r]&X\ar[r]&M\ar[r]&Y\ar[r]&0}
\end{align*}
in $\mathcal{A}$ with $X\in \mathcal{X}$ and $Y\in \mathcal{Y}$.
\end{itemize}}
\end{definition}

Let $(\mathcal{X},\mathcal{Y})$ be a torsion pair in an abelian category $\mathcal{A}$. Then we have
\begin{itemize}
\item[(1)] $\mathcal{X}$ is closed under extensions, quotient objects and coproducts.
\item[(2)] $\mathcal{Y}$ is closed under extensions, subobjects and products.
\end{itemize}
Moreover, we have
$$\mathcal{X}={^{\perp_{0}}\mathcal{Y}}:=\{M\in\mathcal{A}\mid\Hom_{\mathcal{A}}(M,\mathcal{Y})=0\},$$
$$\mathcal{Y}={\mathcal{X}^{\perp_{0}}}:=\{M\in\mathcal{A}\mid \Hom_{\mathcal{A}}(\mathcal{X},M)=0\}.$$

\begin{definition}
{\rm (\cite{HRS, BARI07H}) Let $(\mathcal{X},\mathcal{Y})$ be a torsion pair in an abelian category $\mathcal{A}$.
\begin{itemize}
\item[(1)] $(\mathcal{X},\mathcal{Y})$ is called \emph{tilting} (resp. \emph{cotilting})
if any object in $\mathcal{A}$ is isomorphic to a subobject of an object in $\mathcal{X}$
(resp. a quotient object of an object in $\mathcal{Y}$).
\item[(2)] $(\mathcal{X},\mathcal{Y})$ is called \emph{hereditary} (resp. \emph{cohereditary})
if $\mathcal{X}$ is closed under subobjects (resp. $\mathcal{Y}$ is closed under quotient objects).
\end{itemize}}
\end{definition}

\section{Torsion Pairs in a Recollement}\label{tor}

In this section, assume that ($\mathcal{A},\mathcal{B},\mathcal{C}$) is a recollement of abelian categories:
$$\xymatrix{\mathcal{A}\ar[rr]!R|{i_{*}}&&\ar@<-2ex>[ll]!R|{i^{*}}\ar@<2ex>[ll]!R|{i^{!}}\mathcal{B}
\ar[rr]!L|{j^{*}}&&\ar@<-2ex>[ll]!L|{j_{!}}\ar@<2ex>[ll]!L|{j_{*}}\mathcal{C}.}$$
We begin with the following

\begin{lemma}\label{lem-3.1}
For any $B\in\mathcal{B}$, we have
\begin{itemize}
\item[(1)] If $i^{*}$ is exact, then there exists an exact sequence
$$\xymatrix@C=15pt{0\ar[r]&j_{!}j^{*}(B)\ar[r]^-{\epsilon_{B}}&
B\ar[r]&i_{*}i^{*}(B)\ar[r]&0.}$$
\item[(2)] If $i^{!}$ is exact, then there exists an exact sequence
$$\xymatrix@C=15pt{0\ar[r]&i_{*}i^{!}(B)\ar[r]&B\ar[r]^-{\eta_{B}}&
j_{*}j^{*}(B)\ar[r]&0.}$$
\item[(3)] $i^{*}$ and $i^{!}$ are exact if and only if $i^{*}\cong i^{!}$; in this case, we have $j_{*}\cong j_{!}$.
\end{itemize}
\end{lemma}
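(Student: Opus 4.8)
The strategy throughout is to run the two canonical four-term exact sequences of Lemma \ref{lem-2.2}(4) through the exact functors $i^{*}$ or $i^{!}$, simplifying by means of the vanishing identities $i^{*}j_{!}=0=i^{!}j_{*}$ of Lemma \ref{lem-2.2}(1) together with the adjunction isomorphisms $i^{*}i_{*}\cong 1_{\mathcal{A}}\cong i^{!}i_{*}$ of Lemma \ref{lem-2.2}(3). For (1), I would start from $0\ra i_{*}(A)\ra j_{!}j^{*}(B)\xrightarrow{\epsilon_{B}}B\ra i_{*}i^{*}(B)\ra 0$; it is enough to show $A=0$, for then $i_{*}(A)=0$, the map $\epsilon_{B}$ becomes monic, and the four-term sequence collapses to the asserted short exact sequence. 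Applying the exact functor $i^{*}$ to the monomorphism $i_{*}(A)\mr j_{!}j^{*}(B)$ keeps it monic, and $i^{*}i_{*}(A)\cong A$ then embeds into $i^{*}j_{!}j^{*}(B)=0$, forcing $A=0$. Part (2) is the formal dual: applying the exact functor $i^{!}$ to the epimorphism $j_{*}j^{*}(B)\er i_{*}(A')$ keeps it epic, and $i^{!}j_{*}j^{*}(B)=0$ surjects onto $i^{!}i_{*}(A')\cong A'$, so $A'=0$ and the second sequence of Lemma \ref{lem-2.2}(4) collapses as required.

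For (3), sufficiency is immediate: if $i^{*}\cong i^{!}$, then $i^{*}$ is isomorphic to the left exact functor $i^{!}$ and so is itself left exact, while it is right exact by Lemma \ref{lem-2.2}(2); hence $i^{*}$, and symmetrically $i^{!}$, is exact. For necessity, assume both $i^{*}$ and $i^{!}$ are exact, so that parts (1) and (2) are available and, by Lemma \ref{lem-2.2}(6), both $i^{!}j_{!}=0$ and $i^{*}j_{*}=0$ hold. The plan is to apply the exact functor $i^{*}$ to the short exact sequence of part (2), obtaining $0\ra i^{*}i_{*}i^{!}(B)\ra i^{*}(B)\ra i^{*}j_{*}j^{*}(B)\ra 0$; here $i^{*}i_{*}i^{!}(B)\cong i^{!}(B)$ while $i^{*}j_{*}j^{*}(B)=0$, so $i^{!}(B)\cong i^{*}(B)$, giving $i^{!}\cong i^{*}$. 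Finally, inserting $i^{!}j_{!}=0$ and $i^{*}j_{*}=0$ into the exact sequence of natural transformations $0\ra i_{*}i^{!}j_{!}\ra j_{!}\ra j_{*}\ra i_{*}i^{*}j_{*}\ra 0$ of Lemma \ref{lem-2.2}(5) annihilates the two outer terms, leaving the middle arrow $j_{!}\ra j_{*}$ an isomorphism.

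The only step that is not a formal diagram chase or a duality is the necessity in (3): one must notice that the sequence of part (2) should be hit with $i^{*}$ rather than $i^{!}$, so that its quotient term is exactly what $i^{*}j_{*}=0$ annihilates, while $i^{*}i_{*}\cong 1_{\mathcal{A}}$ converts its subobject into $i^{!}(B)$; once this is seen, $i^{*}\cong i^{!}$ drops out and the gluing isomorphism $j_{*}\cong j_{!}$ follows painlessly from Lemma \ref{lem-2.2}(5). I would also verify that the isomorphism $i^{!}(B)\cong i^{*}(B)$ just produced is natural in $B$ --- which it is, being assembled from the natural maps of Lemma \ref{lem-2.2}(4) and the natural isomorphism $i^{*}i_{*}\cong 1_{\mathcal{A}}$ --- so that it genuinely upgrades to an isomorphism of functors $i^{!}\cong i^{*}$.
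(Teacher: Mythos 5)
Your proof is correct and follows essentially the same strategy as the paper: collapse the four-term sequences of Lemma \ref{lem-2.2}(4) by killing the outer $i_{*}(A)$-terms, then compare $i^{*}$ and $i^{!}$ via the resulting short exact sequences and deduce $j_{!}\cong j_{*}$ from Lemma \ref{lem-2.2}(5)(6). The only (immaterial) difference is which functor you apply where --- e.g.\ in (1) you hit the monomorphism with the exact $i^{*}$ and use $i^{*}j_{!}=0$, whereas the paper applies the left exact $i^{!}$ and uses $i^{!}j_{!}=0$ from Lemma \ref{lem-2.2}(6); these are mirror images of the same argument.
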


\begin{proof}
(1) By Lemma \ref{lem-2.2}(4), it suffices to prove that $\epsilon_{B}$ is monic.
Applying $i^{!}$ to the first exact sequence in Lemma \ref{lem-2.2}(4), we get an exact sequence
$$\xymatrix@C=15pt{0\ar[r]&i^{!}i_{*}(A)\ar[r]&i^{!}j_{!}j^{*}(B).}$$
By Lemma \ref{lem-2.2}(6),  we have $i^{!}j_{!}j^{*}(B)=0$. So $A\cong i^{!}i_{*}(A)=0$ by Lemma \ref{lem-2.2}(3),
and hence $\epsilon_{B}$ is monic.

(2) It is similar to (1).

(3) If $i^{*}\cong i^{!}$, then $i^{*}$ and $i^{!}$ are exact by Lemma \ref{lem-2.2}(2). Conversely,
applying $i^{!}$ to the exact sequence in (1), we get an exact sequence of natural transformations
$$\xymatrix@C=15pt{0\ar[r]&i^{!}j_{!}j^{*}\ar[r]&i^{!}\ar[r]&i^{!}i_{*}i^{*}\ar[r]&0.}$$
By Lemma \ref{lem-2.2}(6)(3), we have $i^{!}\cong i^{!}i_{*}i^{*}\cong i^{*}$.

The isomorphism $j_{*}\cong j_{!}$ follows from Lemma \ref{lem-2.2}(5)(6).
\end{proof}

Our main result is the following

\begin{theorem}\label{Torsion pair}
Let ($\mathcal{X}',\mathcal{Y}'$) and ($\mathcal{X}'',\mathcal{Y}''$) be torsion pairs in $\mathcal{A}$ and $\mathcal{C}$
respectively, and let
\begin{align*}
\mathcal{X}&:=\{B\in\mathcal{B}\mid i^{*}(B)\in\mathcal{X}'\ \text{and}\ j^{*}(B)\in\mathcal{X}''\},\\
\mathcal{Y}&:=\{B\in\mathcal{B}\mid i^{!}(B)\in\mathcal{Y}'\ \text{and}\ j^{*}(B)\in\mathcal{Y}''\}.
\end{align*}
Then we have
\begin{itemize}
\item[(1)] ($\mathcal{X},\mathcal{Y}$) is a torsion pair in $\mathcal{B}$.
\item[(2)] $(\mathcal{X}',\mathcal{Y}')=(i^{*}(\mathcal{X}),i^{!}(\mathcal{Y}))$
and $(\mathcal{X}'',\mathcal{Y}'')=(j^{*}(\mathcal{X}),j^{*}(\mathcal{Y}))$.
\item[(3)] If ($\mathcal{X}',\mathcal{Y}'$) and ($\mathcal{X}'',\mathcal{Y}''$) are cohereditary (resp. hereditary),
and if $i^{!}$ (resp. $i^*$) is exact, then $(\mathcal{X},\mathcal{Y})$ is cohereditary (resp. hereditary).
\item[(4)] If ($\mathcal{X}',\mathcal{Y}'$) and ($\mathcal{X}'',\mathcal{Y}''$) are tilting (resp. cotilting),
and if $i^{!}$ and $j_{!}$ (resp. $i^*$ and $j_*$) are exact, then $(\mathcal{X},\mathcal{Y})$ is tilting (resp. cotilting).
\end{itemize}
\end{theorem}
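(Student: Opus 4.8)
My plan is to prove the four parts in order, concentrating the real work in part (1) and then deducing (2)--(4) from the adjunction isomorphisms of Lemma \ref{lem-2.2}(3) together with the exactness hypotheses; throughout, the identities $i^{*}j_{!}=0$ and $i^{!}j_{*}=0$ of Lemma \ref{lem-2.2}(1) are exactly what let me control the (non-exact) functors $i^{*}$ and $i^{!}$. For the orthogonality condition in (1), I take $X\in\mathcal{X}$, $Y\in\mathcal{Y}$, $f\colon X\to Y$, and annihilate $f$ in two steps. Composing with the counit $\epsilon_{X}\colon j_{!}j^{*}(X)\to X$ lands in $\Hom_{\mathcal{B}}(j_{!}j^{*}(X),Y)\cong\Hom_{\mathcal{C}}(j^{*}(X),j^{*}(Y))=0$ (since $j^{*}(X)\in\mathcal{X}''$, $j^{*}(Y)\in\mathcal{Y}''$), so $f$ factors through $\Coker\epsilon_{X}\cong i_{*}i^{*}(X)$ by Lemma \ref{lem-2.2}(4); the induced map lies in $\Hom_{\mathcal{B}}(i_{*}i^{*}(X),Y)\cong\Hom_{\mathcal{A}}(i^{*}(X),i^{!}(Y))=0$, whence $f=0$.

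The main obstacle is the existence, for each $B\in\mathcal{B}$, of a sequence $0\to X\to B\to Y\to 0$ with $X\in\mathcal{X}$ and $Y\in\mathcal{Y}$, the difficulty being that $i^{*},i^{!}$ need not be exact, so the two coordinate conditions cannot be imposed separately. I would build a filtration $X_{0}\subseteq B_{1}\subseteq N\subseteq B$. Writing $0\to U\to j^{*}(B)\to V\to 0$ for the decomposition in $\mathcal{C}$, set $B_{1}:=\Ker(B\xrightarrow{\eta_{B}}j_{*}j^{*}(B)\to j_{*}(V))$; exactness of $j^{*}$ and $j^{*}j_{*}\cong 1$ give $j^{*}(B_{1})\cong U\in\mathcal{X}''$ and $j^{*}(B/B_{1})\cong V\in\mathcal{Y}''$, and since all further corrections are made by subobjects in $\Ima i_{*}=\Ker j^{*}$, the $j^{*}$-coordinate is now fixed. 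Decomposing $i^{*}(B_{1})$ as $0\to P\to i^{*}(B_{1})\to Q\to 0$, put $X_{0}:=\Ker(B_{1}\to i_{*}i^{*}(B_{1})\to i_{*}(Q))$, so $B_{1}/X_{0}\cong i_{*}(Q)$; because $\Ker(B_{1}\to i_{*}i^{*}(B_{1}))=\Ima\epsilon_{B_{1}}$ is a quotient of $j_{!}j^{*}(B_{1})$ and $i^{*}j_{!}=0$ with $i^{*}$ right exact, one gets $i^{*}(X_{0})\cong P$ and $j^{*}(X_{0})\cong U$, so $X_{0}\in\mathcal{X}$. Dually, decomposing $i^{!}(B/B_{1})$ as $0\to P_{2}\to i^{!}(B/B_{1})\to Q_{2}\to 0$, letting $N$ be the preimage of $i_{*}(P_{2})\subseteq B/B_{1}$ and using $i^{!}j_{*}=0$ (which annihilates $\Ima\eta_{B/B_{1}}\hookrightarrow j_{*}(V)$), one finds $i^{!}(B/N)\cong Q_{2}\in\mathcal{Y}'$, so $B/N\in\mathcal{Y}$. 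Finally $N/X_{0}\cong i_{*}(M)$ for some $M\in\mathcal{A}$ with decomposition $0\to t(M)\to M\to f(M)\to 0$; choosing $X$ with $X_{0}\subseteq X\subseteq N$ and $X/X_{0}\cong i_{*}t(M)$, the sequences $0\to X_{0}\to X\to i_{*}t(M)\to 0$ and $0\to i_{*}f(M)\to B/X\to B/N\to 0$, via right/left exactness of $i^{*}/i^{!}$ and closure of $\mathcal{X}',\mathcal{Y}'$ under quotients/subobjects and extensions, yield $i^{*}(X)\in\mathcal{X}'$ and $i^{!}(B/X)\in\mathcal{Y}'$; thus $X\in\mathcal{X}$ and $Y:=B/X\in\mathcal{Y}$.

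Parts (2) and (3) are then comparatively routine. For (2), the inclusions $i_{*}(\mathcal{X}')\subseteq\mathcal{X}$, $j_{!}(\mathcal{X}'')\subseteq\mathcal{X}$, $i_{*}(\mathcal{Y}')\subseteq\mathcal{Y}$, $j_{*}(\mathcal{Y}'')\subseteq\mathcal{Y}$ (immediate from Lemma \ref{lem-2.2}(1),(3)) combined with $i^{*}i_{*}\cong i^{!}i_{*}\cong 1$ and $j^{*}j_{!}\cong j^{*}j_{*}\cong 1$ give all four equalities. For (3), since $j^{*}$ is always exact, closure of $\mathcal{Y}$ under quotients (resp. $\mathcal{X}$ under subobjects) reduces to transporting an epimorphism through $i^{!}$ (resp. a monomorphism through $i^{*}$), which is precisely what exactness of $i^{!}$ (resp. $i^{*}$) supplies, after which cohereditariness (resp. hereditariness) of the two component pairs closes the argument.

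For (4) in the tilting case (assuming $i^{!},j_{!}$ exact) I would embed $B$ into an object of $\mathcal{X}$ in two pushout steps. Embedding $j^{*}(B)\hookrightarrow C_{0}\in\mathcal{X}''$ and using exactness of $j_{!}$ to make $j_{!}j^{*}(B)\hookrightarrow j_{!}(C_{0})$ monic, the pushout along $\epsilon_{B}$ gives a monomorphism $B\hookrightarrow W_{0}$ with $j^{*}(W_{0})\cong C_{0}$. Since $i^{!}$ is exact, Lemma \ref{lem-3.1}(2) gives $0\to i_{*}i^{!}(W_{0})\to W_{0}\to j_{*}j^{*}(W_{0})\to 0$ with $j_{*}(C_{0})\in\mathcal{X}$ (here $i^{*}j_{*}=0$ by Lemma \ref{lem-2.2}(6)); embedding $i^{!}(W_{0})\hookrightarrow A_{0}\in\mathcal{X}'$ and pushing the counit $i_{*}i^{!}(W_{0})\hookrightarrow W_{0}$ out along $i_{*}i^{!}(W_{0})\hookrightarrow i_{*}(A_{0})$ yields $W$ with $0\to i_{*}(A_{0})\to W\to j_{*}(C_{0})\to 0$. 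Both ends lie in $\mathcal{X}$, which is extension-closed, so $W\in\mathcal{X}$ and $B\hookrightarrow W_{0}\hookrightarrow W$. The cotilting case is dual: replace embeddings by covers, pushouts by pullbacks, $(i^{!},j_{!})$ by $(i^{*},j_{*})$, and Lemma \ref{lem-3.1}(2) by Lemma \ref{lem-3.1}(1), exhibiting $B$ as a quotient of an object of $\mathcal{Y}$.
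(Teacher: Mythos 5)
Your proof is correct, and all four parts rest on the same pillars as the paper's argument: the four-term sequences of Lemma \ref{lem-2.2}(4), the vanishing $i^{*}j_{!}=0=i^{!}j_{*}$, exactness of $i_{*}$ and $j^{*}$, and $\Ima i_{*}=\Ker j^{*}$; the differences are in organization. In part (1) your filtration is the paper's pullback/pushout construction in disguise: your $B_{1}=\Ker(B\to j_{*}(V))$ is the paper's subobject $M$, and your $X_{0}$ is the paper's torsion subobject $X$. The two extra layers you insert are in fact vacuous: since $B/B_{1}$ embeds in $j_{*}(V)$ and $i^{!}$ is left exact with $i^{!}j_{*}=0$, one has $i^{!}(B/B_{1})=0$, hence $P_{2}=Q_{2}=0$ and $N=B_{1}$; moreover $N/X_{0}\cong i_{*}(Q)$ with $Q\in\mathcal{Y}'$, so $t(M)=0$ and your final $X$ equals $X_{0}$. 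This is harmless redundancy --- your verifications (e.g.\ $i^{*}(X_{0})\cong P$ via $i^{*}(\Ima\epsilon_{B_{1}})=0$, and $i^{!}(B/X)\in\mathcal{Y}'$ via left exactness and closure under subobjects and extensions) are stated in enough generality to go through --- but noticing it would collapse your four-step filtration to the paper's two steps. In part (4) you genuinely deviate: the paper performs the $j$-side and $i$-side corrections of $B$ in parallel (four pushout diagrams) and then checks $j^{*}(X)\cong X''$ and $i^{*}(X)\in\mathcal{X}'$ by hand, whereas you correct sequentially ($B\hookrightarrow W_{0}$ fixing the $j^{*}$-coordinate, then splitting $W_{0}$ by Lemma \ref{lem-3.1}(2) and pushing out along $i_{*}i^{!}(W_{0})\hookrightarrow i_{*}(A_{0})$) and conclude from extension-closure of $\mathcal{X}$ applied to $0\to i_{*}(A_{0})\to W\to j_{*}(C_{0})\to 0$. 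Your route is a bit cleaner and makes transparent exactly where exactness of $i^{!}$ enters (both for Lemma \ref{lem-3.1}(2) and for $i^{*}j_{*}=0$, which is what puts $j_{*}(C_{0})$ in $\mathcal{X}$); the paper's route has the minor advantage of not needing part (1) to certify that $\mathcal{X}$ is extension-closed. Parts (2) and (3) coincide with the paper's arguments.
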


\begin{proof}
(1) Let $X\in \mathcal{X}$ and $Y\in\mathcal{Y}$. Applying the functor $\Hom_{\mathcal{B}}(-,Y)$
to the following exact sequence
$$\xymatrix@C=15pt{j_{!}j^{*}(X)\ar[r]^-{\epsilon_{X}}&X\ar[r]&i_{*}i^{*}(X)\ar[r]&0}$$
in $\mathcal{B}$, we get an exact sequence
$$\xymatrix@C=15pt{0\ar[r]&\Hom_{\mathcal{B}}(i_{*}i^{*}(X),Y)\ar[r]&
\Hom_{\mathcal{B}}(X,Y)\ar[r]&\Hom_{\mathcal{B}}(j_{!}j^{*}(X),Y).}$$
By assumption, ($\mathcal{X}',\mathcal{Y}'$) and ($\mathcal{X}'',\mathcal{Y}''$) are torsion pairs
in $\mathcal{A}$ and $\mathcal{C}$ respectively.
Because $i^{*}(X)\in\mathcal{X}'$, $i^{!}(Y)\in\mathcal{Y}'$, $j^{*}(X)\in\mathcal{X}''$ and $j^{*}(Y)\in\mathcal{Y}''$,
we have
$$\Hom_{\mathcal{B}}(j_{!}j^{*}(X),Y)\cong\Hom_{\mathcal{C}}(j^{*}(X),j^{*}(Y))=0$$ and
$$\Hom_{\mathcal{B}}(i_{*}i^{*}(X),Y)\cong\Hom_{\mathcal{A}}(i^{*}(X),i^{!}(Y))=0.$$ It follows that
$\Hom_{\mathcal{B}}(X,Y)=0$ and $\Hom_{\mathcal{B}}(\mathcal{X},\mathcal{Y})=0$.

Let $B\in \mathcal{B}$. There exists an exact sequence
$$\xymatrix@C=15pt{0\ar[rr]&&i_{*}i^{!}(B)\ar[rr]&&B\ar[rr]^{\eta_{B}}
\ar@{>>}[rd]&&j_{*}j^{*}(B)\ar[rr]&&i_{*}(A')\ar[rr]&&0\\
&&&&&\Ima \eta_{B}\ar@{>->}[ru]}$$
in $\mathcal{B}$ with $A'\in\mathcal{A}$. Because $j^{*}(B)\in\mathcal{C}$ and ($\mathcal{X}'',\mathcal{Y}''$)
is a torsion pair in $\mathcal{C}$, there exists an exact sequence
$$\xymatrix@C=15pt{0\ar[r]&X''\ar[r]&j^{*}(B)\ar[r]^{h}&Y''\ar[r]&0}$$
in $\mathcal{C}$ with $X''\in \mathcal{X}''$ and $Y''\in\mathcal{Y}''$. Notice that $j_{*}$ is left exact
by Lemma \ref{lem-2.2}(2), so
$$\xymatrix@C=15pt{0\ar[r]&j_{*}(X'')\ar[r]&j_{*}j^{*}(B)\ar[r]^{j_*(h)}&j_{*}(Y'')}$$ is exact and
we have the following pullback diagram
\begin{align}\label{DV0}
\xymatrix@C=15pt{&0\ar@{-->}[d]&0\ar[d]&0\ar@{-->}[d]\\
0\ar@{-->}[r]&K\ar@{-->}[r]^{f}\ar@{-->}[d]^{g}&\ar@{-->}[r]\Ima \eta_{B}\ar[d]&\Coker f \ar@{-->}[d]\ar@{-->}[r]&0\\
0\ar[r]&j_{*}(X'')\ar@{-->}[d]\ar[r]&j_{*}j^{*}(B)\ar[r]\ar[d]&\Ima j_{*}(h)\ar[d]\ar[r]&0\\
0\ar@{-->}[r]&\Coker g\ar@{-->}[d]\ar@{-->}[r]&\ar[d]i_{*}(A')\ar@{-->}[r]&U\ar@{-->}[d]\ar@{-->}[r]&0\\
&0&0&0.}
\end{align}
Then we get the following pullback diagram
\begin{align}\label{DV2}
\xymatrix@C=15pt{&0\ar@{-->}[d]&0\ar[d]\\
&i_{*}i^{!}(B)\ar@{==}[r]\ar@{-->}[d]&i_{*}i^{!}(B)\ar[d]\\
0\ar@{-->}[r]&M\ar@{-->}[r]\ar@{-->}[d]&B\ar@{-->}[r]\ar[d]&\Coker f\ar@{==}[d]\ar@{-->}[r]&0\\
0\ar[r]&K\ar@{-->}[d]\ar[r]&\Ima \eta_{B}\ar[r]\ar[d]&\Coker f\ar[r]&0\\
&0&0.}
\end{align}
Because $i^{*}(M)\in\mathcal{A}$ and ($\mathcal{X}',\mathcal{Y}'$) is a torsion pair in $\mathcal{A}$,
there exists an exact sequence
$$\xymatrix@C=15pt{0\ar[r]&X'\ar[r]&i^{*}(M)\ar[r]&Y'\ar[r]&0}$$
in $\mathcal{A}$ with $X'\in \mathcal{X}'$ and $Y'\in\mathcal{Y}'$.
Notice that $i_{*}$ is exact by Lemma \ref{lem-2.2}(2), so
$$\xymatrix@C=15pt{0\ar[r]&i_{*}(X')\ar[r]&i_{*}i^{*}(M)\ar[r]&i_{*}(Y')\ar[r]&0}$$
is exact and we have the following pullback diagram
\begin{align}\label{DV1}
\xymatrix@C=15pt{&0\ar@{-->}[d]&0\ar[d]\\
&\Ima \epsilon_{M}\ar@{==}[r]\ar@{-->}[d]&\Ima \epsilon_{M}\ar[d]\\
0\ar@{-->}[r]&X\ar@{-->}[r]\ar@{-->}[d]&\ar@{-->}[r]M\ar[d]&i_{*}(Y')\ar@{==}[d]\ar@{-->}[r]&0\\
0\ar[r]&i_{*}(X')\ar[r]\ar@{-->}[d]&i_{*}i^{*}(M)
\ar[r]\ar[d]&i_{*}(Y')\ar[r]&0\\
&0&0,}
\end{align}
where the exactness of the middle column follows from Lemma \ref{lem-2.2}(4).
Now we get the following pushout diagram
\begin{align}\label{end}
\xymatrix@C=15pt{&0\ar[d]&0\ar@{-->}[d]\\
&X\ar@{==}[r]\ar[d]&X\ar@{-->}[d]\\
0\ar[r]&M\ar[r]\ar[d]&\ar[r]B\ar@{-->}[d]&\Coker f\ar@{==}[d]\ar[r]&0\\
0\ar@{-->}[r]&i_{*}(Y')\ar[d]\ar@{-->}[r]&Y\ar@{-->}[r]\ar@{-->}[d]&\Coker f\ar@{-->}[r]&0\\
&0&0.}
\end{align}
To get the assertion, it suffices to show $X\in\mathcal{X}$ and $Y\in\mathcal{Y}$.

Since $i^{*}j_{!}=0$ and $i^*$ is right exact by Lemma \ref{lem-2.2}(1)(2), we have $i^{*}(\Ima \epsilon_{M})=0$. Since $i^{*}$ is right exact by Lemma \ref{lem-2.2}(2),
applying the functor $i^{*}$ to the leftmost column in the diagram (\ref{DV1}) yields $i^{*}(X)\cong i^{*}i_{*}(X')\cong X'\in\mathcal{X}'$.
On the other hand, note that $j^{*}$ is exact (by Lemma \ref{lem-2.2}(2)) and $\Ima i_{*}=\Ker j^{*}$. So,
applying the functor $j^{*}$ to the bottom row in the diagram (\ref{DV0}), we have $j^{*}(\Coker g)=0=j^*(U)$; furthermore, we have
\begin{align*}
&\ \ \ \ j^{*}(X)\\
& \cong j^{*}(M) \ \text{(by applying} \ j^{*}\ \text{to the middle row in the diagram (\ref{DV1}))}\\
& \cong j^{*}(K) \ \text{(by applying} \ j^{*}\ \text{to the leftmost column in diagram (\ref{DV2}))}\\
& \cong j^{*}j_{*}(X'') \ \text{(by applying} \ j^{*}\ \text{to the leftmost column in the diagram (\ref{DV0}))}\\
& \cong X''\in\mathcal{X}''.
\end{align*}
It implies $X\in \mathcal{X}$.

Applying the functor $j^{*}$ to the bottom row in the diagram (\ref{end}) and the rightmost column in the diagram (\ref{DV0}),
since $j^{*}$ is exact and $\Ima i_{*}=\Ker j^{*}$, we have that $j^{*}(Y)(\cong j^{*}(\Coker f)\cong j^*(\Ima j_*(h)))$
is isomorphic to a subobject of $Y''(\cong j^{*}j_{*}(Y'')$). Because $\mathcal{Y}''$ is closed under
subobjects, it follows that $j^{*}(Y)\in\mathcal{Y}''$.
On the other hand, applying the functor $i^{!}$ to the rightmost column in the diagram (\ref{DV0}) and the bottom row
in the diagram (\ref{end}), since $i^{!}$ is left exact and $i^{!}j_{*}=0$ by Lemma \ref{lem-2.2}(1)(2), we have that $i^{!}(\Ima j_*(h))=0$ and
$i^{!}(\Coker f)=0$. So $i^{!}(Y)\cong i^{!}i_{*}(Y')\cong Y'\in\mathcal{Y}'$, and hence $Y\in \mathcal{Y}$.

(2) It is trivial that $i^{*}(\mathcal{X})\subseteq\mathcal{X}'$. For any $X'\in \mathcal{X}'$, since $i^{*}i_{*}(X')\cong X'\in\mathcal{X}'$
and $j^{*}i_{*}(X')=0\in\mathcal{X}''$, we have $i_{*}(X')\in \mathcal{X}$, and hence $X'\cong i^{*}(i_{*}(X'))\in i^{*}(\mathcal{X})$.
Thus $\mathcal{X}'\subseteq i^{*}(\mathcal{X})$. Similarly, we get $\mathcal{Y}'=i^{!}(\mathcal{Y})$,
$\mathcal{X}''=j^{*}(\mathcal{X})$ and $\mathcal{Y}''=j^{*}(\mathcal{Y})$.

(3) Assume that ($\mathcal{X}',\mathcal{Y}'$) and ($\mathcal{X}'',\mathcal{Y}''$) are cohereditary.
Then $\mathcal{Y}'$ and $\mathcal{Y}''$ are closed under quotient objects.
Let $Y\in\mathcal{Y}$ and
$$\xymatrix@C=15pt{0\ar[r]&Y'\ar[r]&Y\ar[r]&Y_{1}\ar[r]&0}$$ be an exact sequence in $\mathcal{B}$.
Since $j^{*}$ and $i^{!}$ are exact by Lemma \ref{lem-2.2}(2) and assumption, we have that $j^{*}(Y_{1})$ and $i^{!}(Y_{1})$
are isomorphic to quotient objects of $j^{*}(Y)(\in \mathcal{Y}'')$ and $i^{!}(Y)(\in \mathcal{Y}')$ respectively. So
$j^{*}(Y_1)\in \mathcal{Y}''$ and $i^{!}(Y_1)\in \mathcal{Y}'$. It implies that $Y_{1}\in \mathcal{Y}$
and ($\mathcal{X},\mathcal{Y}$) is cohereditary.

Dually, we get the assertion for the hereditary case.

(4) Assume that ($\mathcal{X}',\mathcal{Y}'$) and ($\mathcal{X}'',\mathcal{Y}''$) are tilting.
Let $B\in\mathcal{B}$. By Lemma \ref{lem-2.2}(4) and Lemma \ref{lem-3.1}(2),
there exist exact sequences
$$\xymatrix@C=15pt{0\ar[rr]&&i_{*}(A)\ar[rr]&&j_{!}j^{*}(B)
\ar[rr]^{\epsilon_{B}}\ar@{>>}[rd]&&
B\ar[rr]&&i_{*}i^{*}(B)\ar[rr]&&0,\\
&&&&&\Ima \epsilon_{B}\ar@{>->}[ru]}$$
$$\xymatrix@C=15pt{0\ar[r]&i_{*}i^{!}(B)\ar[r]
&B\ar[r]&j_{*}j^{*}(B)\ar[r]&0}$$
in $\mathcal{B}$ with $A\in\mathcal{A}$.

Since $(\mathcal{X}'',\mathcal{Y}'')$ is tilting and $j^{*}(B)\in \mathcal{C}$,
there exists a monomorphism $\xymatrix@C=15pt{0\ar[r]&j^{*}(B)\ar[r]&X''}$ in $\mathcal{C}$ with $X''\in\mathcal{X}''$.
Since $j_{!}$ is exact by assumption, we get the following exact sequence
$$\xymatrix@C=15pt{0\ar[r]&j_{!}j^{*}(B)\ar[r]&j_{!}(X'')\ar[r]&j_{!}(X''/j^{*}(B))\ar[r]&0}$$
in $\mathcal{B}$ and the following pushout diagram
\begin{align}\label{3.6}
\xymatrix@C=15pt{&&0\ar[d]&0\ar@{-->}[d]\\
0\ar[r]&i_{*}(A)\ar@{==}[d]\ar[r]&j_{!}j^{*}(B)\ar[r]\ar[d]&\Ima \epsilon_{B}\ar@{-->}[d]\ar[r]&0\\
0\ar@{-->}[r]&i_{*}(A)\ar@{-->}[r]&j_{!}(X'')\ar[d]\ar@{-->}[r]&U\ar@{-->}[d]\ar@{-->}[r]&0\\
&&j_{!}(X''/j^{*}(B))\ar[d]\ar@{==}[r]&\ar@{-->}[d]j_{!}(X''/j^{*}(B))\\
&&0&0.}\end{align}
Then we get the following pushout diagram
\begin{align}\label{3.7}
\xymatrix@C=15pt{&0\ar[d]&0\ar@{-->}[d]\\
0\ar[r]&\Ima \epsilon_{B}\ar[r]\ar[d]&\ar[r]B\ar@{-->}[d]&i_{*}i^{*}(B)\ar@{==}[d]\ar[r]&0\\
0\ar@{-->}[r]&U\ar[d]\ar@{-->}[r]&V''\ar@{-->}[r]\ar@{-->}[d]&i_{*}i^{*}(B)\ar@{-->}[r]&0\\
&j_{!}(X''/j^{*}(B))\ar[d]\ar@{==}[r]&\ar@{-->}[d]j_{!}(X''/j^{*}(B))\\
&0&0.}\end{align}

On the other hand, since $(\mathcal{X}',\mathcal{Y}')$ is tilting and $i^{!}(B)\in \mathcal{A}$, there exists a monomorphism
$\xymatrix@C=15pt{0\ar[r]&i^{!}(B)\ar[r]&X'}$ in $\mathcal{A}$ with $X'\in\mathcal{X}'$. Since $i_{*}$ is exact by Lemma \ref{lem-2.2}(2),
we get the following exact sequence
$$\xymatrix@C=15pt{0\ar[r]&i_{*}i^{!}(B)\ar[r]&i_{*}(X')\ar[r]&i_{*}(X'/i^{!}(B))\ar[r]&0}$$
in $\mathcal{B}$ and the following pushout diagram
\begin{align}\label{-2}
\xymatrix@C=15pt{&0\ar[d]&0\ar@{-->}[d]\\
0\ar[r]&i_{*}i^{!}(B)\ar[r]\ar[d]&\ar[r]B\ar@{-->}[d]&j_{*}j^{*}(B)\ar@{==}[d]\ar[r]&0\\
0\ar@{-->}[r]&i_{*}(X')\ar[d]\ar@{-->}[r]&V'\ar@{-->}[r]\ar@{-->}[d]&j_{*}j^{*}(B)\ar@{-->}[r]&0\\
&i_{*}(X'/i^{!}(B))\ar[d]\ar@{==}[r]&\ar@{-->}[d]i_{*}(X'/i^{!}(B))\\
&0&0.}
\end{align}
Then we get the following pushout diagram
\begin{align}\label{-1}
\xymatrix@C=15pt{&0\ar[d]&0\ar@{-->}[d]\\
0\ar[r]&B\ar[r]\ar[d]&\ar[r]V''\ar@{-->}[d]&j_{!}(X''/j^{*}(B))\ar@{==}[d]\ar[r]&0\\
0\ar@{-->}[r]&V'\ar[d]\ar@{-->}[r]&X\ar@{-->}[r]\ar@{-->}[d]&j_{!}(X''/j^{*}(B))\ar@{-->}[r]&0\\
&i_{*}(X'/i^{!}(B))\ar[d]\ar@{==}[r]&\ar@{-->}[d]i_{*}(X'/i^{!}(B))\\
&0&0.}
\end{align}

Since $j^{*}$ is exact (by Lemma \ref{lem-2.2}(2)) and $\Ima i_{*}=\Ker j^{*}$, we have
\begin{align*}
&\ \ \ \ j^{*}(X)\\
& \cong j^{*}(V'') \ \text{(by applying} \ j^{*}\ \text{to the middle column in the diagram (\ref{-1}))}\\
& \cong j^{*}(U) \ \text{(by applying} \ j^{*}\ \text{to the middle row in diagram (\ref{3.7}))}\\
& \cong j^{*}j_{!}(X'') \ \text{(by applying} \ j^{*}\ \text{to the middle row in the diagram (\ref{3.6}))}\\
& \cong X''\in\mathcal{X}''.
\end{align*}
Since $i^{!}$ is exact by assumption, we have $i^{*}j_{*}=0$ by Lemma \ref{lem-2.2}(6).
So, applying $i^{*}$ to the middle row in the diagram (\ref{-2}) yields that $\xymatrix@C=15pt{i^{*}i_{*}(X')\ar[r]&i^{*}(V')\ar[r]&0}$ is exact.
Since $i^{*}j_{!}=0$ by Lemma \ref{lem-2.2}(1),
applying $i^{*}$ to the middle row in the diagram (\ref{-1}) yields that
$\xymatrix@C=15pt{i^{*}(V')\ar[r]&i^{*}(X)\ar[r]&0}$ is exact.
Thus $i^{*}(X)$ is isomorphic to a quotient object of $i^{*}i_{*}(X')(\cong X'\in \mathcal{X'})$.
Notice that $\mathcal{X'}$ is closed under quotient objects, so $i^{*}(X)\in \mathcal{X'}$, and hence
$X\in \mathcal{X}$. Thus we conclude that $(\mathcal{X},\mathcal{Y})$ is tilting.

Dually, we get the assertion for the cotilting case.
\end{proof}

Recall from \cite{GR88T} that a triple of subcategories ($\mathcal{X},\mathcal{Y},\mathcal{Z}$) of an abelian category
is called a \emph{torsion torsion-free triple} (\emph{$\TTF$-triple} for short) if ($\mathcal{X},\mathcal{Y}$) and ($\mathcal{Y,\mathcal{Z}}$)
are torsion pairs. By \cite[Theorem 4.3]{PCVJ14R}, we have that ($\Ker i^{*},\Ima i_{*},\Ker i^{!}$) is a $\TTF$-triple in $\mathcal{B}$.

\begin{corollary}
Let $(\mathcal{X}',\mathcal{Y}',\mathcal{Z}')$ and $(\mathcal{X}'',\mathcal{Y}'',\mathcal{Z}'')$ are
$\TTF$-triples in $\mathcal{A}$ and $\mathcal{C}$ respectively. If $i^{*}$ and $i^{!}$ are exact,
then $(\mathcal{X},\mathcal{Y},\mathcal{Z})$ is a $\TTF$-triple in $\mathcal{B}$, where $\mathcal{X}$, $\mathcal{Y}$ are as in
Theorem \ref{Torsion pair} and
\begin{align*}
\mathcal{Z}&:=\{B\in\mathcal{B}\mid i^{*}(B)\in\mathcal{Z}'\ and\ j^{*}(B)\in\mathcal{Z}''\}.
\end{align*}
\end{corollary}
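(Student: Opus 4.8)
The plan is to unwind the definition of a $\TTF$-triple: to prove that $(\mathcal{X},\mathcal{Y},\mathcal{Z})$ is a $\TTF$-triple in $\mathcal{B}$, it suffices to show that both $(\mathcal{X},\mathcal{Y})$ and $(\mathcal{Y},\mathcal{Z})$ are torsion pairs in $\mathcal{B}$. Since $(\mathcal{X}',\mathcal{Y}',\mathcal{Z}')$ and $(\mathcal{X}'',\mathcal{Y}'',\mathcal{Z}'')$ are $\TTF$-triples, by definition $(\mathcal{X}',\mathcal{Y}')$ and $(\mathcal{Y}',\mathcal{Z}')$ are torsion pairs in $\mathcal{A}$, while $(\mathcal{X}'',\mathcal{Y}'')$ and $(\mathcal{Y}'',\mathcal{Z}'')$ are torsion pairs in $\mathcal{C}$. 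So I have two pairs of torsion pairs, one in each of $\mathcal{A}$ and $\mathcal{C}$, each of which I can feed into Theorem \ref{Torsion pair}(1).

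For the first, applying Theorem \ref{Torsion pair}(1) to $(\mathcal{X}',\mathcal{Y}')$ and $(\mathcal{X}'',\mathcal{Y}'')$ immediately yields that $(\mathcal{X},\mathcal{Y})$ is a torsion pair in $\mathcal{B}$, with $\mathcal{X}$ and $\mathcal{Y}$ exactly as defined in the statement. For the second, I apply Theorem \ref{Torsion pair}(1) to $(\mathcal{Y}',\mathcal{Z}')$ and $(\mathcal{Y}'',\mathcal{Z}'')$; here the roles shift, and the theorem produces a torsion pair $(\widetilde{\mathcal{X}},\widetilde{\mathcal{Y}})$ in $\mathcal{B}$ given by
$$\widetilde{\mathcal{X}}=\{B\in\mathcal{B}\mid i^{*}(B)\in\mathcal{Y}'\ \text{and}\ j^{*}(B)\in\mathcal{Y}''\},\qquad \widetilde{\mathcal{Y}}=\{B\in\mathcal{B}\mid i^{!}(B)\in\mathcal{Z}'\ \text{and}\ j^{*}(B)\in\mathcal{Z}''\}.$$

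The key step is to reconcile $(\widetilde{\mathcal{X}},\widetilde{\mathcal{Y}})$ with $(\mathcal{Y},\mathcal{Z})$, and this is exactly where the hypothesis that $i^{*}$ and $i^{!}$ are exact enters. By Lemma \ref{lem-3.1}(3), this exactness forces $i^{*}\cong i^{!}$. Consequently the condition $i^{*}(B)\in\mathcal{Y}'$ defining $\widetilde{\mathcal{X}}$ is equivalent to $i^{!}(B)\in\mathcal{Y}'$, so $\widetilde{\mathcal{X}}=\mathcal{Y}$; dually the condition $i^{!}(B)\in\mathcal{Z}'$ defining $\widetilde{\mathcal{Y}}$ is equivalent to $i^{*}(B)\in\mathcal{Z}'$, so $\widetilde{\mathcal{Y}}=\mathcal{Z}$. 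Hence $(\mathcal{Y},\mathcal{Z})$ is a torsion pair in $\mathcal{B}$.

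Combining the two conclusions, both $(\mathcal{X},\mathcal{Y})$ and $(\mathcal{Y},\mathcal{Z})$ are torsion pairs in $\mathcal{B}$, so $(\mathcal{X},\mathcal{Y},\mathcal{Z})$ is a $\TTF$-triple. The only genuine subtlety — and the reason the exactness assumption cannot be dropped — is the bookkeeping in the previous paragraph: the class $\mathcal{Y}$ is defined via $i^{!}$, whereas Theorem \ref{Torsion pair}(1) naturally describes the torsion class of the second induced pair via $i^{*}$, and matching the two descriptions requires precisely the identification $i^{*}\cong i^{!}$ supplied by Lemma \ref{lem-3.1}(3). Beyond this identification the argument is a formal consequence of the main theorem, so I expect no further obstacle.
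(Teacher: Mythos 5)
Your proof is correct and is exactly the argument the paper intends: the paper's own proof is the one-line citation ``It follows from Lemma \ref{lem-3.1}(3) and Theorem \ref{Torsion pair},'' and your write-up simply makes explicit the two applications of Theorem \ref{Torsion pair}(1) and the use of $i^{*}\cong i^{!}$ to identify the second induced torsion pair with $(\mathcal{Y},\mathcal{Z})$. Nothing further is needed.
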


\begin{proof}
It follows from Lemma \ref{lem-3.1}(3) and Theorem \ref{Torsion pair}.
\end{proof}

To study the converse of Theorem \ref{Torsion pair}, we need the following easy observation.

\begin{lemma}\label{3.4}
If ($\mathcal{X},\mathcal{Y}$)
is a torsion pair in $\mathcal{B}$, then we have
\begin{itemize}
\item[(1)] $j_{*}j^{*}(\mathcal{Y})\subseteq\mathcal{Y}$ if and only if $j_{!}j^{*}(\mathcal{X})\subseteq\mathcal{X}$.
\item[(2)] $i_{*}i^{!}(\mathcal{Y})\subseteq\mathcal{Y}$ if and only if $i_{*}i^{*}(\mathcal{X})\subseteq\mathcal{X}$.
\end{itemize}
\end{lemma}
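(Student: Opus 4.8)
\textbf{Plan for the proof of Lemma \ref{3.4}.}
The two statements are dual to each other under the symmetry interchanging the roles of the torsion and torsion-free classes, so I expect to prove (1) in detail and obtain (2) by a parallel argument. For (1), the plan is to exploit the adjunctions $(j_!,j^*)$ and $(j^*,j_*)$ together with the characterization $\mathcal{X}={}^{\perp_0}\mathcal{Y}$ and $\mathcal{Y}=\mathcal{X}^{\perp_0}$ recorded after the definition of torsion pair. The key observation I would use repeatedly is that, by these adjunctions, for every $C\in\mathcal{C}$, $X\in\mathcal{X}$ and $Y\in\mathcal{Y}$ we have the natural isomorphisms
\begin{align*}
\Hom_{\mathcal{B}}(j_!(C),Y)&\cong\Hom_{\mathcal{C}}(C,j^{*}(Y)),\\
\Hom_{\mathcal{B}}(X,j_*(C))&\cong\Hom_{\mathcal{C}}(j^{*}(X),C).
\end{align*}

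First I would assume $j_*j^*(\mathcal{Y})\subseteq\mathcal{Y}$ and show $j_!j^*(\mathcal{X})\subseteq\mathcal{X}$. Since $\mathcal{X}={}^{\perp_0}\mathcal{Y}$, it suffices to check that $\Hom_{\mathcal{B}}(j_!j^*(X),Y)=0$ for every $X\in\mathcal{X}$ and $Y\in\mathcal{Y}$. By the first adjunction this $\Hom$ is isomorphic to $\Hom_{\mathcal{C}}(j^*(X),j^*(Y))$, which in turn is isomorphic (using $j^*j_*\cong 1_{\mathcal{C}}$ from Lemma \ref{lem-2.2}(3)) to $\Hom_{\mathcal{B}}(X,j_*j^*(Y))$ via the second adjunction. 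By hypothesis $j_*j^*(Y)\in\mathcal{Y}$, so this last group vanishes because $\Hom_{\mathcal{B}}(\mathcal{X},\mathcal{Y})=0$. Hence $j_!j^*(X)\in{}^{\perp_0}\mathcal{Y}=\mathcal{X}$. The converse direction is entirely symmetric: assuming $j_!j^*(\mathcal{X})\subseteq\mathcal{X}$, one uses $\mathcal{Y}=\mathcal{X}^{\perp_0}$ and runs the same chain of isomorphisms in the opposite order to conclude $j_*j^*(Y)\in\mathcal{X}^{\perp_0}=\mathcal{Y}$.

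For part (2), I would repeat the argument with $i_*i^*$ and $i_*i^!$ in place of $j_!j^*$ and $j_*j^*$, now invoking the adjunctions $(i^*,i_*)$ and $(i_*,i^!)$ together with the isomorphisms $i^*i_*\cong 1_{\mathcal{A}}$ and $1_{\mathcal{A}}\cong i^!i_*$ from Lemma \ref{lem-2.2}(3). The same reduction to the vanishing of $\Hom_{\mathcal{A}}(i^*(X),i^!(Y))$ works verbatim. I do not anticipate a serious obstacle here; the only point requiring mild care is keeping the adjunction isomorphisms applied on the correct side and ensuring the natural transformations of Lemma \ref{lem-2.2}(3) are used in the direction in which they are genuine isomorphisms, but this is exactly what the ``easy observation'' tag in the statement signals.
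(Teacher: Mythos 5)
Your proposal is correct and follows essentially the same route as the paper: the paper's proof is exactly the chain of adjunction isomorphisms $\Hom_{\mathcal{B}}(X,j_{*}j^{*}(Y))\cong \Hom_{\mathcal{C}}(j^{*}(X),j^{*}(Y))\cong \Hom_{\mathcal{B}}(j_{!}j^{*}(X),Y)$ combined with $\mathcal{X}={}^{\perp_{0}}\mathcal{Y}$ and $\mathcal{Y}=\mathcal{X}^{\perp_{0}}$, with (2) handled analogously via $(i^{*},i_{*})$ and $(i_{*},i^{!})$. The only cosmetic difference is that your appeal to $j^{*}j_{*}\cong 1_{\mathcal{C}}$ is unnecessary, since the adjunction $(j^{*},j_{*})$ applied with $C=j^{*}(Y)$ already gives the second isomorphism directly.
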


\begin{proof}
(1) Let $X\in\mathcal{X}$ and $Y\in\mathcal{Y}$. Since
$$\Hom_{\mathcal{B}}(X,j_{*}j^{*}(Y))\cong \Hom_{\mathcal{C}}(j^{*}(X),j^{*}(Y))\cong \Hom_{\mathcal{B}}(j_{!}j^{*}(X),Y)$$
and $\mathcal{X}={^{\perp_{0}}\mathcal{Y}}\ and\ \mathcal{Y}={\mathcal{X}^{\perp_{0}}}$, the assertion follows.

(2) It is similar to (1).
\end{proof}

The following result shows that the converse of Theorem \ref{Torsion pair} holds true under certain conditions.

\begin{theorem}\label{converse}
Let ($\mathcal{X},\mathcal{Y}$) be a torsion pair in $\mathcal{B}$. Then we have
\begin{itemize}
\item[(1)] $(i^{*}(\mathcal{X}),i^{!}(\mathcal{Y}))$ is a torsion pair in $\mathcal{A}$.
\item[(2)] $j_{*}j^{*}(\mathcal{Y})\subseteq\mathcal{Y}$ if and only if $(j^{*}(\mathcal{X}),j^{*}(\mathcal{Y}))$
is a torsion pair in $\mathcal{C}$.
\item[(3)] If $j_{*}j^{*}(\mathcal{Y})\subseteq\mathcal{Y}$, then
\begin{align*}
\mathcal{X}&=\{B\in\mathcal{B}\mid i^{*}(B)\in i^{*}(\mathcal{X})\ \text{and}\ j^{*}(B)\in j^{*}(\mathcal{X})\},\\
\mathcal{Y}&=\{B\in\mathcal{B}\mid i^{!}(B)\in i^{!}(\mathcal{Y})\ \text{and}\ j^{*}(B)\in j^{*}(\mathcal{Y})\}.
\end{align*}
\end{itemize}
\end{theorem}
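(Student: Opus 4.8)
The plan is to prove the three parts of Theorem~\ref{converse} essentially by exploiting the $\TTF$-triple $(\Ker i^{*},\Ima i_{*},\Ker i^{!})$ and the adjunction isomorphisms, reducing every orthogonality claim to a $\Hom$ computation between the recollement functors.

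For part~(1), I would first verify $\Hom_{\mathcal{A}}(i^{*}(\mathcal{X}),i^{!}(\mathcal{Y}))=0$. Given $X\in\mathcal{X}$ and $Y\in\mathcal{Y}$, the adjunction $(i^{*},i_{*})$ gives $\Hom_{\mathcal{A}}(i^{*}(X),i^{!}(Y))\cong\Hom_{\mathcal{B}}(i_{*}i^{*}(X),\ ?)$ — here I would instead use $(i_{*},i^{!})$ to write $\Hom_{\mathcal{A}}(i^{*}(X),i^{!}(Y))\cong\Hom_{\mathcal{B}}(i_{*}i^{*}(X),Y)$, and then argue that $i_{*}i^{*}(X)$ is a quotient of $X$ (via the counit sequence in Lemma~\ref{lem-2.2}(4)), hence lies in $\mathcal{X}$ since torsion classes are closed under quotients; therefore $\Hom_{\mathcal{B}}(i_{*}i^{*}(X),Y)=0$. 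Next I need the decomposition sequence: for $A\in\mathcal{A}$, apply the torsion pair $(\mathcal{X},\mathcal{Y})$ to $i_{*}(A)\in\mathcal{B}$ to get $0\to X_{0}\to i_{*}(A)\to Y_{0}\to 0$ with $X_{0}\in\mathcal{X}$, $Y_{0}\in\mathcal{Y}$, then apply $i^{*}$ (right exact) and $i^{!}$ (left exact) and use $i^{*}i_{*}\cong\id\cong i^{!}i_{*}$ to produce the required sequence $i^{*}(X_{0})\to A\to i^{!}(Y_{0})$ in $\mathcal{A}$ with the outer terms in $i^{*}(\mathcal{X})$ and $i^{!}(\mathcal{Y})$; checking exactness at the ends is the technical point here.

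For part~(2), the forward direction assumes $j_{*}j^{*}(\mathcal{Y})\subseteq\mathcal{Y}$. Orthogonality $\Hom_{\mathcal{C}}(j^{*}(\mathcal{X}),j^{*}(\mathcal{Y}))=0$ follows from the chain of adjunction isomorphisms $\Hom_{\mathcal{C}}(j^{*}(X),j^{*}(Y))\cong\Hom_{\mathcal{B}}(X,j_{*}j^{*}(Y))$, which vanishes because $j_{*}j^{*}(Y)\in\mathcal{Y}$ by hypothesis and $X\in\mathcal{X}={}^{\perp_{0}}\mathcal{Y}$. For the decomposition sequence of an arbitrary $C\in\mathcal{C}$, I would apply $(\mathcal{X},\mathcal{Y})$ to $j_{!}(C)$ (or $j_{*}(C)$) in $\mathcal{B}$, then apply the exact functor $j^{*}$ and use $j^{*}j_{!}\cong\id\cong j^{*}j_{*}$ to descend the sequence to $\mathcal{C}$. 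The converse direction is the easy observation in Lemma~\ref{3.4}(1): if $(j^{*}(\mathcal{X}),j^{*}(\mathcal{Y}))$ is a torsion pair then $j_{!}j^{*}(\mathcal{X})\subseteq\mathcal{X}$ (since $j^{*}j_{!}j^{*}(X)\cong j^{*}(X)\in j^{*}(\mathcal{X})$ and one checks the $i^{*}$-component vanishes), which by Lemma~\ref{3.4}(1) is equivalent to $j_{*}j^{*}(\mathcal{Y})\subseteq\mathcal{Y}$.

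For part~(3), one inclusion is immediate: if $B\in\mathcal{X}$ then trivially $i^{*}(B)\in i^{*}(\mathcal{X})$ and $j^{*}(B)\in j^{*}(\mathcal{X})$. The reverse inclusion is where the hypothesis $j_{*}j^{*}(\mathcal{Y})\subseteq\mathcal{Y}$ does real work: given $B$ with $i^{*}(B)\in i^{*}(\mathcal{X})$ and $j^{*}(B)\in j^{*}(\mathcal{X})$, I would take its canonical sequence $0\to X_{B}\to B\to Y_{B}\to 0$ and show $Y_{B}=0$ by proving $i^{!}(Y_{B})=0$ and $j^{*}(Y_{B})=0$, using that $Y_{B}\in\mathcal{Y}$ together with part~(2) giving $j^{*}(Y_{B})\in j^{*}(\mathcal{Y})$ which must be orthogonal to $j^{*}(B)\in j^{*}(\mathcal{X})$, forcing the relevant maps to vanish. \textbf{The main obstacle} I anticipate is precisely this vanishing argument in part~(3): one must combine the $\TTF$-triple structure with the hypothesis on $j_{*}j^{*}$ to rule out a nonzero torsion-free part, and the bookkeeping linking the $i^{*}$/$i^{!}$-data and the $j^{*}$-data of $Y_{B}$ back to the orthogonality $\mathcal{X}={}^{\perp_{0}}\mathcal{Y}$ is the delicate step; the analogous description of $\mathcal{Y}$ then follows by the dual argument.
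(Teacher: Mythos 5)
Your overall strategy (adjunctions plus the canonical counit/unit sequences plus Lemma \ref{3.4}) is close to the paper's, and the orthogonality computations in (1) and the forward direction of (2) match the paper's proof. But there are concrete gaps. First, in part (1) the step you flag as ``the technical point'' is exactly where the missing idea lies: applying the right exact $i^{*}$ and the left exact $i^{!}$ separately to $0\to X_{0}\to i_{*}(A)\to Y_{0}\to 0$ gives $i^{*}(X_{0})\to A\to i^{*}(Y_{0})\to 0$ and $0\to i^{!}(X_{0})\to A\to i^{!}(Y_{0})$, and these do not assemble into the sequence $0\to i^{*}(X_{0})\to A\to i^{!}(Y_{0})\to 0$ unless you already know $X_{0}$ and $Y_{0}$ lie in $\Ima i_{*}$. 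The paper supplies precisely this: $\Ima i_{*}=\Ker j^{*}$ is a Serre subcategory of $\mathcal{B}$ (apply the exact functor $j^{*}$ to the sequence to see $j^{*}(X_{0})=j^{*}(Y_{0})=0$), so $X_{0}\cong i_{*}(X_{1})$, $Y_{0}\cong i_{*}(Y_{1})$, and since $i_{*}$ is an exact equivalence onto its image the sequence descends to $0\to X_{1}\to A\to Y_{1}\to 0$ with $X_{1}\cong i^{*}(X_{0})$ and $Y_{1}\cong i^{!}(Y_{0})$. Second, your justification of the converse direction of (2) is circular: you deduce $j_{!}j^{*}(\mathcal{X})\subseteq\mathcal{X}$ from the claim that an object with $j^{*}$-component in $j^{*}(\mathcal{X})$ and vanishing $i^{*}$-component lies in $\mathcal{X}$, but that componentwise description of $\mathcal{X}$ is part (3), which is only proved under the hypothesis $j_{*}j^{*}(\mathcal{Y})\subseteq\mathcal{Y}$ you are trying to establish. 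The fix is a one-line adjunction: $\Hom_{\mathcal{B}}(\mathcal{X},j_{*}j^{*}(\mathcal{Y}))\cong\Hom_{\mathcal{C}}(j^{*}(\mathcal{X}),j^{*}(\mathcal{Y}))=0$, hence $j_{*}j^{*}(\mathcal{Y})\subseteq\mathcal{X}^{\perp_{0}}=\mathcal{Y}$ directly.

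For part (3) your route is genuinely different from the paper's and is workable, but the ``delicate step'' you defer needs an actual argument. The paper avoids it entirely: for $B$ with $i^{*}(B)\in i^{*}(\mathcal{X})$ and $j^{*}(B)\in j^{*}(\mathcal{X})$, apply $\Hom_{\mathcal{B}}(-,Y)$ to $j_{!}j^{*}(B)\to B\to i_{*}i^{*}(B)\to 0$ and use the two adjunctions together with (1) and (2) to get $\Hom_{\mathcal{B}}(B,Y)=0$ for all $Y\in\mathcal{Y}$, whence $B\in{^{\perp_{0}}\mathcal{Y}}=\mathcal{X}$. If you insist on showing $Y_{B}=0$ in the canonical sequence, note that $i^{!}(Y_{B})=0$ does not follow from orthogonality alone: you must first get $j^{*}(Y_{B})=0$ (the epimorphism $j^{*}(B)\twoheadrightarrow j^{*}(Y_{B})$ from $j^{*}(\mathcal{X})$ to $j^{*}(\mathcal{Y})$ is zero), conclude $Y_{B}\in\Ima i_{*}$ so that $i^{*}(Y_{B})\cong i^{!}(Y_{B})$, and then observe this object lies in $i^{*}(\mathcal{X})\cap i^{!}(\mathcal{Y})=0$ because $i^{*}(Y_{B})$ is a quotient of $i^{*}(B)$. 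As written, your sketch does not contain these steps, so the reverse inclusion in (3) is not yet proved.
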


\begin{proof}
(1) Let $X\in\mathcal{X}$ and $Y\in\mathcal{Y}$. Applying the functor $\Hom_{\mathcal{B}}(-,Y)$ to the exact sequence
 $$\xymatrix@C=15pt{j_{!}j^{*}(X)\ar[r]^-{\epsilon_{X}}&X\ar[r]&i_{*}i^{*}(X)\ar[r]&0}$$
in $\mathcal{B}$, we get an exact sequence
$$\xymatrix@C=15pt{0\ar[r]&\Hom_{\mathcal{B}}(i_{*}i^{*}(X),Y)\ar[r]&
\Hom_{\mathcal{B}}(X,Y)\ar[r]&\Hom_{\mathcal{B}}(j_{!}j^{*}(X),Y).}$$ Since $\Hom_{\mathcal{B}}(X,Y)=0$,
we have $\Hom_{\mathcal{B}}(i_{*}i^{*}(X),Y)=0$. It follows that $i_{*}i^{*}(X)\in{^{\perp_{0}}\mathcal{Y}}=\mathcal{X}$
and $i_{*}i^{*}(\mathcal{X})\subseteq\mathcal{X}$. So $i_{*}i^{!}(\mathcal{Y})\subseteq\mathcal{Y}$ by Lemma \ref{3.4}(2).

Let $X'\in i^{*}(\mathcal{X})$ and $Y'\in i^{!}(\mathcal{Y})$. Then there exist $X\in \mathcal{X}$ and $Y\in\mathcal{Y}$
such that $X'=i^{*}(X)$ and $Y'=i^{!}(Y)$. Because ($\mathcal{X},\mathcal{Y}$)
is a torsion pair in $\mathcal{B}$ (by assumption) and $i_{*}i^{!}(Y)\in\mathcal{Y}$, we have that
$$\Hom_{\mathcal{A}}(X',Y')=\Hom_{\mathcal{A}}(i^{*}(X),i^{!}(Y))\cong\Hom_{\mathcal{B}}(X,i_{*}i^{!}(Y))=0$$
and $\Hom_{\mathcal{A}}(i^{*}(\mathcal{X}),i^{!}(\mathcal{Y}))=0$.

Let $A\in\mathcal{A}$. Because $i_{*}(A)\in\mathcal{B}$, there exists an exact sequence
$$\xymatrix@C=15pt{0\ar[r]&X\ar[r]&i_{*}(A)\ar[r]&Y\ar[r]&0}$$
in $\mathcal{B}$ with $X\in\mathcal{X}$ and $Y\in\mathcal{Y}$.
Since $i_{*}(\mathcal{A})$ is a Serre subcategory of $\mathcal{B}$ by \cite[Proposition 2.8]{PCVJ14R},
there exist $X_{1},Y_{1}\in\mathcal{A}$ such that $X\cong i_{*}(X_{1})$ and $Y\cong i_{*}(Y_{1})$. Since
$\xymatrix@C=15pt{i_{*}: \mathcal{A}\ar[r]&i_{*}(\mathcal{A})}$ is an equivalence, we get that
$$\xymatrix@C=15pt{0\ar[r]&X_{1}\ar[r]&A\ar[r]&Y_{1}\ar[r]&0}$$
is an exact sequence in $\mathcal{A}$ with $X_{1}\cong i^{*}(i_{*}(X_{1}))\cong i^{*}(X)\in i^{*}(\mathcal{X})$
and $Y_{1}\cong i^{!}(i_{*}(Y_{1}))\cong i^{!}(Y)\in i^{!}(\mathcal{Y})$.
Thus we conclude that $(i^{*}(\mathcal{X}),i^{!}(\mathcal{Y}))$ is a torsion pair in $\mathcal{A}$.

(2) Let $j_{*}j^{*}(\mathcal{Y})\subseteq\mathcal{Y}$. For
any $X'\in j^{*}(\mathcal{X})$ and $Y'\in j^{*}(\mathcal{Y})$, there exist $X\in \mathcal{X}$ and $Y\in\mathcal{Y}$
such that $X'=j^{*}(X)$ and $Y'=j^{*}(Y)$. Because ($\mathcal{X},\mathcal{Y}$) is a torsion pair in $\mathcal{B}$, we have that
$$\Hom_{\mathcal{C}}(X',Y')=\Hom_{\mathcal{C}}(j^{*}(X),j^{*}(Y))\cong\Hom_{\mathcal{B}}(X,j_{*}j^{*}(Y))=0$$
and $\Hom_{\mathcal{C}}(j^{*}(\mathcal{X}),j^{*}(\mathcal{Y}))=0$.

Let $C\in\mathcal{C}$. Because $j_{*}(C)\in \mathcal{B}$, there exists an exact sequence
$$\xymatrix@C=15pt{0\ar[r]&X\ar[r]&j_{*}(C)\ar[r]&Y\ar[r]&0}$$
in $\mathcal{B}$ with $X\in\mathcal{X}$ and $Y\in\mathcal{Y}$. Since $j^{*}$ is exact by Lemma \ref{lem-2.2}(2), we have that
$$\xymatrix@C=15pt{0\ar[r]&j^{*}(X)\ar[r]&j^{*}j_{*}(C)(\cong C)\ar[r]&j^{*}(Y)\ar[r]&0}$$ is also exact and the assertion follows.

Conversely, if $(j^{*}(\mathcal{X}),j^{*}(\mathcal{Y}))$ is a torsion pair in $\mathcal{C}$, then we have
$$\Hom_{\mathcal{B}}(\mathcal{X},j_{*}j^{*}(\mathcal{Y}))\cong \Hom_{\mathcal{C}}(j^{*}(\mathcal{X}),j^{*}(\mathcal{Y}))=0,$$
which implies $j_{*}j^{*}(\mathcal{Y})\subseteq\mathcal{X}^{\perp_{0}}=\mathcal{Y}$.

(3) It is trivial that
$$\mathcal{X}\subseteq\{B\in\mathcal{B}\mid i^{*}(B)\in i^{*}(\mathcal{X})\ \text{and}\ j^{*}(B)\in j^{*}(\mathcal{X})\},$$
$$\mathcal{Y}\subseteq\{B\in\mathcal{B}\mid i^{!}(B)\in i^{!}(\mathcal{Y})\ \text{and}\ j^{*}(B)\in j^{*}(\mathcal{Y})\}.$$
Conversely, let $B\in\mathcal{B}$ with $i^{*}(B)\in i^{*}(\mathcal{X})$ and $j^{*}(B)\in j^{*}(\mathcal{X})$.
By Lemma \ref{lem-2.2}(4), there exists an exact sequence
$$\xymatrix@C=15pt{j_{!}j^{*}(B)
\ar[r]^-{\epsilon_{B}}&
B\ar[r]&i_{*}i^{*}(B)\ar[r]&0}$$
in $\mathcal{B}$. For any $Y\in \mathcal{Y}$, applying the functor $\Hom_{\mathcal{B}}(-,Y)$ to the above exact sequence, we get an exact sequence
$$\xymatrix@C=15pt{0\ar[r]&\Hom_{\mathcal{B}}(i_{*}i^{*}(B),Y)\ar[r]&
\Hom_{\mathcal{B}}(B,Y)\ar[r]&\Hom_{\mathcal{B}}(j_{!}j^{*}(B),Y).}$$
By (1) and (2), $(i^{*}(\mathcal{X}),i^{!}(\mathcal{Y}))$ and $(j^{*}(\mathcal{X}),j^{*}(\mathcal{Y}))$ are torsion pairs
in $\mathcal{A}$ and $\mathcal{C}$ respectively.
So we have
$$\Hom_{\mathcal{B}}(j_{!}j^{*}(B),Y)\cong\Hom_{\mathcal{C}}(j^{*}(B),j^{*}(Y))=0,$$
$$\Hom_{\mathcal{B}}(i_{*}i^{*}(B),Y)\cong\Hom_{\mathcal{A}}(i^{*}(B),i^{!}(Y))=0,$$
and hence $\Hom_{\mathcal{B}}(B,Y)=0$ and $B\in{^{\perp_{0}}\mathcal{Y}}=\mathcal{X}$.
It follows that $\{B\in\mathcal{B}\mid i^{*}(B)\in i^{*}(\mathcal{X})\ \text{and}\ j^{*}(B)\in j^{*}(\mathcal{X})\}\subseteq\mathcal{X}$.
Dually, we have $\{B\in\mathcal{B}\mid i^{!}(B)\in i^{!}(\mathcal{Y})\ \text{and}\ j^{*}(B)\in j^{*}(\mathcal{Y})\}\subseteq\mathcal{Y}$.
\end{proof}

Finally, we give an example to illustrate the obtained results.

For an algebra $A$, we use $\modu A$ to denote the category of finitely generated left $A$-modules.
Let $A,B$ be artin algebras and $_AM_B$ an $(A,B)$-bimodule, and let $\Lambda={A\ {M}\choose 0\ \ B}$ be a triangular matrix algebra.
Then any module in $\modu \Lambda$ can be uniquely written as a triple ${X\choose Y}_{f}$ with $X\in\modu A$, $Y\in\modu B$
and $f\in\Hom_A(M\otimes_{B}Y,X)$ (\cite[p.76]{AMRISSO95R}).

\begin{example}
{\rm Let $A$ be a finite dimensional algebra given by the quiver $\xymatrix@C=15pt{1\ar[r]&2}$. Then $\Lambda={A\ A\choose 0\ A}$ is
a finite dimensional algebra given by the quiver
$$\xymatrix@C=15pt{&\cdot\ar[rd]^{\beta}\\
\cdot\ar[rd]_{\gamma}\ar[ru]^{\alpha}&&\cdot\\
&\cdot\ar[ru]_{\delta}}$$
with the relation $\beta\alpha-\delta\gamma$. The Auslander-Reiten quiver of $\Lambda$ is
$$\xymatrix@C=15pt{&P(1)\choose 0\ar[rd]&&0\choose S(2)\ar[rd]&&S(1)\choose S(1)\ar[rd]\\
S(2)\choose 0\ar[ru]\ar[rd]&&P(1)\choose S(2)\ar[ru]\ar[rd]\ar[r]&P(1)\choose P(1)\ar[r]&S(1)\choose P(1)\ar[ru]\ar[rd]&&{0\choose S(1)}.\\
&S(2)\choose S(2)\ar[ru]&&S(1)\choose 0\ar[ru]&&{0\choose P(1)}\ar[ru]}$$
By \cite[Example 2.12]{PC14H}, we have that
$$\xymatrix{\modu A\ar[rr]!R|-{i_{*}}&&\ar@<-2ex>[ll]!R|-{i^{*}}
\ar@<2ex>[ll]!R|-{i^{!}}\modu \Lambda
\ar[rr]!L|-{j^{*}}&&\ar@<-2ex>[ll]!R|-{j_{!}}\ar@<2ex>[ll]!R|-{j_{*}}
\modu A}$$
is a recollement of abelian categories, where
\begin{align*}
&i^{*}({X\choose Y}_{f})=\Coker f, & i_{*}(X)={X\choose 0},&&i^{!}({X\choose Y}_{f})=X,\\
&j_{!}(Y)={Y\choose Y}_{1}, & j^{*}({X\choose Y}_{f})=Y, &&j_{*}(Y)={0\choose Y}.
\end{align*}
\begin{itemize}
\item[(1)] Take torsion pairs $$(\mathcal{X'},\mathcal{Y'})=(\add(P(1)\oplus S(1)),\add S(2)),$$
$$(\mathcal{X''},\mathcal{Y''})=(\add S(2), \add S(1))$$ in $\modu A$. Then by Theorem \ref{Torsion pair}(1),
we get a torsion pair
$$(\mathcal{X},\mathcal{Y})=(\add ({S(2)\choose S(2)}\oplus{P(1)\choose 0}\oplus{P(1)\choose S(2)}
\oplus {0\choose S(2)}\oplus {S(1)\choose 0}),\add ({S(2)\choose 0}\oplus {0\choose S(1)}))$$ in $\modu \Lambda$.

In addition, take torsion pairs
$$(\mathcal{X'},\mathcal{Y'})=(\mathcal{X''},\mathcal{Y''})=(\add S(2), \add S(1))$$ in $\modu A$.
Then by Theorem \ref{Torsion pair}(1), we get a torsion pair
$$(\mathcal{X},\mathcal{Y})=(\add ({0\choose S(2)} \oplus {S(2)\choose S(2)}\oplus {S(2)\choose 0}),
\add ({S(1)\choose 0} \oplus {S(1)\choose S(1)}\oplus {0\choose S(1)}))$$ in $\modu \Lambda$.

\item[(2)] Take a torsion pair
$$(\mathcal{X},\mathcal{Y})=(\add({0\choose S(2)} \oplus {P(1)\choose P(1)}
\oplus {S(1)\choose 0}\oplus{S(1)\choose P(1)} \oplus {S(1)\choose S(1)} \oplus {0\choose P(1)} \oplus {0\choose S(1)}),$$
$$\add({S(2)\choose 0} \oplus {S(2)\choose S(2)}\oplus {P(1)\choose 0} \oplus {P(1)\choose S(2)}))$$ in $\modu \Lambda$.
Then by Theorem \ref{converse}(1), we have that
$$(i^{*}(\mathcal{X}),i^{!}(\mathcal{Y}))=(\add S(1),\add (S(2)\oplus P(1)))$$ is a torsion pair in $\modu A$.
Since $j_{*}j^{*}(\mathcal{Y})=\add {0\choose S(2)}\nsubseteq \mathcal{Y}$, it follows from Theorem \ref{converse}(2) that
$$(j^{*}(\mathcal{X}),j^{*}(\mathcal{Y}))=(\add(S(2)\oplus P(1)\oplus S(1)), \add S(2))$$
is not a torsion pair in $\modu A$.
\end{itemize}}
\end{example}

\vspace{0.5cm}
\textbf{Acknowledgement}. This work was partially supported by NSFC (No. 11571164), a Project Funded
by the Priority Academic Program Development of Jiangsu Higher Education Institutions, Postgraduate Research and
Practice Innovation Program of Jiangsu Province (Grant No. KYCX17\_0019). The first author would like to thank
Vincent Franjou and Teimuraz Pirashvili for useful suggestions. We also thank Daniel Juteau for sending us 
the paper \cite{JD09D}.

\end{document}